\newtheorem{thm}{Theorem}[section]
\newtheorem{lemma}[thm]{Lemma}
\newtheorem{prop}[thm]{Proposition}
\theoremstyle{remark}
\newtheorem*{definition*}{Definition}
\newtheorem*{remark*}{Remark}
\def\R{{\mathcal R}}
\def\Z{{\mathbb Z}}
\def\e{{\varepsilon}}
\renewcommand{\phi}{\varphi}
\renewcommand{\bar}{\overline}
\newcommand{\cvn}{\text{cv}_N}
\newcommand{\barcvn}{\bar{\text{cv}}_N}
\newcommand{\FN}{F_N}
\newcommand{\pr}{^{\prime}}
\newcommand{\ssm}{\smallsetminus}
\newcommand{\dom}{\text{dom}}
\begin{document}

\title{Constructing Non-uniquely Ergodic Arational Trees}

\author{Brian Mann and Patrick Reynolds}

\address{\tt Department of Mathematics, University of Utah, 155 S 1400 E, Room 233, Salt Lake City, Utah 84112, USA}
  \email{\tt
  mann@math.utah.edu} 
  \email{\tt
  reynolds@math.utah.edu}

\date{\today}

\begin{abstract}
In this technical note, we adapt an idea of Gabai to construct non-uniquely ergodic, non-geometric, arational trees.
\end{abstract}

\maketitle

\section{Introduction}

Let $F=\FN$ denote the rank-$N$ free group, with $N \geq 4$.  A \emph{factor} is a conjugacy class of non-trivial, proper free factors of $\FN$; a conjugacy class of elements of $\FN$ is \emph{primitive} if any of its representatives generates a representative of a factor.  A \emph{curve} is a one edge $\mathbb{Z}$-splitting of $\FN$ with primitive edge group; every curve is a very small tree.  Two curves $T,T\pr$ are called \emph{disjoint} if there is a two edge simplicial tree $Y$ such that both $T$ and $T\pr$ can be obtained by collapsing the components of an orbit of edges in $Y$ to points; this is the same as having that $l_T + l_{T\pr}$ is a length function for a very small tree, namely $Y$.  

A \emph{measure} on a tree $T$ is a collection of finite Borel measures $\{\mu_I\}$, where $I$ runs over finite arcs of $T$, that is invariant under the $\FN$-action and is compatible with restriction to subintervals; if $T$ has dense orbits, then the set of measures on $T$ is a finite dimensional convex cone \cite{Gui00}.  The measure of a finite subtree $K\subseteq T$ is the sum of measures of the arcs in any partition of $K$ into finitely many arcs.  The \emph{co-volume} of $T$ is the infimum of measures of a finite forests $K$ such that $T \subseteq \cup_{g \in \FN} gK$.  If $H \leq \FN$ is finitely generated and does not fix a point of $T$, then there is a unique minimal $H$ invariant subtree $T_H \subseteq T$; define the co-volume of $H$, denoted $covol(H)$, to be the co-volume of $T_H$, for $T$ fixed.  A tree $T$ is called \emph{arational} if $covol(F\pr)>0$ for every factor $F\pr$; this is the same as having every factor act freely and simplicially on some invariant subtree of $T$, see \cite{R12, BR12}.  An arational tree $T$ is called \emph{non-uniquely ergodic} if there are non-homothetic measures $\mu$ and $\mu\pr$ for $T$; see \cite{Gui00, R10b, CHL07} for more about measures on trees.  Our main result is:

\vspace{.3cm}

\noindent {\bf Theorem:} Let $T, T\pr$ be disjoint curves with neighborhoods $U, U\pr$.  There is a $1$-simplex of non-uniquely ergodic, arational, non-geometric trees with one endpoint in each of $U, U\pr$.  

\vspace{.3cm}

Notions around geometric trees are reviewed in Section \ref{S.Background}.  Examples of non-uniquely ergodic arational trees dual to measured foliations on surfaces are well-known, but all such trees are geometric; R. Martin contructed one example of a non-uniquely ergodic tree that is geometric and of \emph{Levitt type} \cite{Mar97}.  Our procedure gives the first examples of non-uniquely ergodic, arational trees that are non-geometric.  

\subsection{Analogy with Gabai's Construction and Outline of Proof}

Our proof of the Theorem is an adaptation of an idea of Gabai, and there are two main technical steps; before explaining those, we recall the proof of the following:

\begin{thm}\cite[Theorem 9.1]{Gab08}\label{T.Gabai}
 Let $\Sigma$ be a $k$-simplex of disjoint curves in $\mathcal{PML}$, and $U_1,\ldots,U_{k+1}$ be neighborhoods of the extreme points.  There is a $k$-simplex $\Sigma\pr$ of non-uniquely ergodic minimal and filling laminations with extreme points in the $U_j$'s.
\end{thm}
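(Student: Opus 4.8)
The plan is to realize $\Sigma\pr$ as the set of projective transverse measures carried by a single minimal, filling lamination $\mu$, obtained from the curves of $\Sigma$ by an infinite iteration of surface homeomorphisms. Write $\Sigma=\mathrm{hull}([c_1],\dots,[c_{k+1}])$ with the $c_j$ pairwise disjoint simple closed curves. First I would fix a finite family $W_1,\dots,W_r$ of essential subsurfaces of $S$ that together fill $S$, and for each $i$ a \pa homeomorphism $\psi_i$ supported on $W_i$ (say with invariant train track $\theta_i\subset W_i$). Then one chooses positive integers $p_1,p_2,\dots$, lets $i(n)$ run periodically through $1,\dots,r$, sets $\phi_n=\psi_{i(n)}^{\,p_n}$ and $\Phi_n=\phi_n\circ\cdots\circ\phi_1$, and tracks the curves $c_j^{(n)}:=\Phi_n(c_j)$. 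Because each $\Phi_n$ is a homeomorphism, $\Phi_n(\Sigma)$ is again a nondegenerate $k$-simplex with vertices $[c_j^{(n)}]$, and the whole content is to choose $W_i$, $\psi_i$, $(p_n)$ so that: (i) each $([c_j^{(n)}])_n$ converges in $\mathcal{PML}(S)$ to some $[\mu_j]$, with all the $c_j^{(n)}$ simultaneously carried, for $m\le n$, by an $m$-th stage train track $\tau_m$ independent of $j$ and with carrying maps $\tau_n\to\tau_m$ given by nonnegative integer transition matrices independent of $j$; (ii) $\mu:=\bigcap_m\mathcal{L}(\tau_m)$ is minimal and filling; (iii) the $[\mu_j]$ are pairwise distinct; and (iv) $[\mu_j]\in U_j$. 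Granting (i)--(iv), $\Sigma\pr:=\mathrm{hull}([\mu_1],\dots,[\mu_{k+1}])$ is a nondegenerate $k$-simplex each of whose points is a measured lamination with support the fixed minimal filling lamination $\mu$, and $\mu$ carries the $k+1\ge 2$ distinct projective measures of (iii), so every point of $\Sigma\pr$ is non-uniquely ergodic, minimal, and filling, with the extreme point $[\mu_j]$ in $U_j$.

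Items (i) and (ii) are the routine parts. For (i), standard \pa estimates inside $W_{i(n)}$ show that, for $p_n$ large, $\phi_n$ replaces the part of any multicurve lying in $W_{i(n)}$ by something uniformly close to a weighted copy of the attracting lamination of $\psi_{i(n)}$ carried by $\theta_{i(n)}$, while fixing the part outside $W_{i(n)}$; splicing $\theta_{i(n)}$ into the previous stage track along $W_{i(n)}$ produces the nested tracks $\tau_m$, the transition matrices, and the convergence, with everything depending only on the $\phi$'s and not on which $c_j$ we started from — this last point is exactly what makes the supports of the $\mu_j$ coincide. For (ii), the standard minimality/filling criterion applies: since the $W_i$ fill $S$ and are revisited infinitely often, suitable products of the transition matrices are positive, forcing every leaf of $\mu$ to be dense and $\mu$ to meet every essential simple closed curve.

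The crux is to get (ii) and (iii) simultaneously, for they pull against each other: the positivity/primitivity of transition matrices needed for minimality and filling tends to contract the projectivized measure polytopes $P\mathcal{M}(\tau_m)$ to a point, i.e.\ toward unique ergodicity, whereas (iii) demands that $\bigcap_m P\mathcal{M}(\tau_m)$ stay $k$-dimensional. I would handle this as in the classical interval-exchange constructions of non-uniquely ergodic foliations (Keane; Keynes--Newton; Sataev; Veech), adapted to surfaces: organize the stages into blocks, and within a block choose the subsurfaces and powers $p_n$ so that the block transition matrix is block-triangular with one distinguished $(k+1)\times(k+1)$ permutation-type block (projectively an isometry on the face of $P\mathcal{M}$ it governs) while the remaining entries supply the mixing required for (ii); seed that distinguished block with the weight vectors of $c_1,\dots,c_{k+1}$. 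Then the rays through $c_1^{(n)},\dots,c_{k+1}^{(n)}$ are transported with bounded projective distortion and converge to $k+1$ distinct limits — in fact to the $k+1$ ergodic measures on $\mu$ — so $\Sigma\pr$ is indeed nondegenerate. For (iv): the homeomorphism $\phi_n$ changes $c_j^{(n-1)}$ only through its intersection with $W_{i(n)}$, so by ordering the blocks so that the transverse mass newly deposited near a given $c_j$ decays geometrically against the mass already accumulated, the telescoped $\mathcal{PML}$-displacement from $[c_j]$ to $[\mu_j]$ can be forced below the radius of $U_j$.

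So the main obstacle is precisely the tension between (ii) and (iii): making $\mu$ minimal and filling while preventing its space of measures from collapsing, so that $k+1$ distinct ergodic measures — the limits of the seed curves $c_j$ — persist. This is the technical heart, handled by the block structure on the transition matrices together with carefully chosen growth rates for the twisting powers; the convergence in (i), the minimality/filling criterion in (ii), and the size estimates in (iv) are then routine \pa combinatorics.
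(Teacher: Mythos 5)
Your overall architecture (iterate homeomorphisms, carry all $k+1$ curves on a common nested sequence of train tracks, use primitivity of transition matrices for minimality/filling, and a Keane--Veech style block-triangular structure to keep the measure polytopes from collapsing to a point) is a legitimate and genuinely different route from the one the paper sketches, and items (i)--(iii) could plausibly be pushed through along those lines. But item (iv) --- landing the extreme points in the prescribed neighborhoods $U_j$ --- has a concrete gap, and it is not a removable one within your setup. Your moves are high powers of pseudo-Anosov maps supported on essential subsurfaces $W_i$ that fill $S$. Since the $W_i$ fill, each $c_j$ meets some $W_{i(n)}$ essentially, and at the first such stage $\phi_n=\psi_{i(n)}^{p_n}$ with $p_n$ large sends $[c_j^{(n-1)}]=[c_j]$ projectively close to the attracting lamination of $\psi_{i(n)}$ inside $W_{i(n)}$ --- i.e.\ far from $[c_j]$, with no control by $U_j$. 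Your ``geometric decay against the mass already accumulated'' argument only helps at later stages; at the first stage the only mass near $c_j$ \emph{is} $c_j$, and the pseudo-Anosov power destroys it. Nothing in your scheme ever pulls the orbit back toward $c_j$.

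This is exactly the point Gabai's argument (which the paper recounts) is built around: after producing a ``generic'' curve $\gamma$ that intersects everything it needs to, one applies a high power of the \emph{Dehn twist about $\alpha_j$ itself}; since $T_{\alpha_j}^m(\gamma)\to\alpha_j$ in $\mathcal{PML}$, the new curve is forced back into $U_j$, while disjointness from the other $\alpha_{j'}$ and positive intersection with the target curve $c$ are preserved. The theorem then follows by induction over an enumeration of all curves, using continuity of $i(\cdot,\cdot)$ to shrink neighborhoods, rather than by any train-track transition-matrix analysis. To repair your proof you would need to interleave such twist-back moves (which are not pseudo-Anosov on any $W_i$) into your sequence $\Phi_n$, and then re-examine how they interact with your block-triangular transition matrices; at that point you are essentially reproducing Gabai's two-step ``mix, then twist back'' mechanism. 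A secondary, smaller issue: in (iii) you ask only that the $[\mu_j]$ be pairwise distinct, but for $\Sigma\pr$ to be a $k$-simplex you need the $k+1$ limit measures to be projectively independent, which is what your distinguished block must actually deliver.
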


\noindent Theorem \ref{T.Gabai} follows at once by induction after showing:

\begin{prop}\label{P.Gab}\cite{Gab08}
 Let $\alpha_1,\ldots,\alpha_r \in \mathcal{PML}(S_{g,p})$ be a collection of disjoint curves; let $U_j$ be a neighborhood of $\alpha_j$, and let $c$ be a curve.  There are disjoint curves $\alpha_1\pr,\ldots,\alpha_r\pr$ so that $\alpha_j\pr \in U_j$ with neighborhoods $U_j\pr \subseteq U_j$, such that $i(\beta,c)>d_c>0$ for any $\beta \in \cup_j U_j\pr$.  
\end{prop}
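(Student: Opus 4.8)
The plan is to reduce to a purely combinatorial statement about curves and then push each ``bad'' curve across $c$ by a large Dehn twist. Since $i(\cdot,c)\colon\mathcal{ML}(S)\to\mathbb R_{\ge 0}$ is continuous and $\{[\mu]:i(\mu,c)>0\}$ is open in $\mathcal{PML}(S)$, it suffices to produce pairwise disjoint curves $\alpha_1\pr,\dots,\alpha_r\pr$ with $\alpha_j\pr\in U_j$ and $i(\alpha_j\pr,c)>0$ for every $j$; one then takes $U_j\pr\ni\alpha_j\pr$ inside $U_j$ small enough that $i(\cdot,c)$ stays above some $d_c>0$ on $\bigcup_j U_j\pr$. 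Call $\alpha_j$ \emph{good} if $i(\alpha_j,c)>0$ and \emph{bad} otherwise, and keep $\alpha_j\pr=\alpha_j$ for the good ones. The elementary step is: given a family of pairwise disjoint curves, one of them $\gamma$ with $i(\gamma,c)=0$, suppose that $c$ meets essentially the component $W$ of (the complement in $S$ of the \emph{other} curves) that contains $\gamma$; choose a curve $\rho\subseteq W$ with $i(\rho,\gamma)>0$ and $i(\rho,c)>0$, and set $\gamma\pr=\tau_\gamma^{\,N}(\rho)$. Because $i(\rho,\gamma)>0$ we have $\tau_\gamma^{\,N}(\rho)\to[\gamma]$ in $\mathcal{PML}$, so $\gamma\pr\in U_\gamma$ once $N$ is large; and since $\gamma$ is disjoint from $c$ and from the other curves, $\tau_\gamma$ fixes each of these, whence $i(\gamma\pr,c)=i(\rho,c)>0$ while $\gamma\pr$ remains disjoint from all the other curves.

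The difficulty is that this step cannot be run on an arbitrary bad curve, since the curve $\rho$ exists only when $c$ genuinely reaches $W$ — which can fail, e.g.\ when $\gamma$ sits nested deep inside the remaining curves and $c$ lies outside all of them. So I would induct on the number of bad curves, fixing at each stage the bad curve chosen as follows. Let $V$ be the component of (the complement in $S$ of \emph{all} the current bad curves) that contains $c$ — legitimate, since $c$ is disjoint from every bad curve — and let $\gamma$ be a bad curve on $\partial V$ (if $c$ is isotopic to a curve on $\partial V$, take that one); such a $\gamma$ exists because $\partial V\neq\emptyset$ (otherwise $V=S$ and there are no bad curves) and $\partial V$ is a union of bad curves. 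The elementary step applied to this $\gamma$ makes it good, leaves the other curves — hence their good/bad status — untouched, preserves pairwise disjointness, and strictly lowers the number of bad curves; after finitely many stages all curves are good, and these are the $\alpha_j\pr$.

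It remains to check the hypothesis ``$c$ meets $W$ essentially'' for this $\gamma$. If $c$ is isotopic to the chosen $\gamma$ this is clear: $c$ is (up to isotopy) the non-peripheral curve $\gamma$ of $W$. Otherwise note that $W$ contains a collar of $\gamma$ lying in $V$, and hence contains the component $Z$ of $(V$ cut along the good curves$)$ that meets this collar. If there are no good curves then $Z=V$, so $c\subseteq W$ and $c$ is a non-peripheral curve of $W$. If there are good curves then, since every piece of a surface cut along a nonempty multicurve meets that multicurve, $Z$ is bordered by some good curve $\mu_0$, which is therefore a boundary component of $W$; as $i(c,\mu_0)>0$, in minimal position $c$ has essential arcs in $W$. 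In every case $c$ meets $W$ essentially, and since $W$ carries the non-peripheral curve $\gamma$ it has positive complexity, so a change-of-coordinates argument yields a curve $\rho\subseteq W$ crossing both $\gamma$ and (the curve, or an essential arc, of) $c$.

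I expect the main obstacle to be exactly the one isolated in the second paragraph: keeping the induction from stalling. That is what forces the specific choice of $\gamma$ on the frontier of the $c$-component together with the small surface-topology lemma of the third paragraph; by contrast the continuity reduction and the Dehn-twist bookkeeping are routine.
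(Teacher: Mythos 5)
Your proposal is correct and follows essentially the same route as the paper's sketch of Gabai's argument: in both, one makes a single curve cross $c$ by picking a curve $\rho$ in the complementary piece $W$ containing $\gamma$ that crosses both $\gamma$ and an arc of $c$, applies a high power of $\tau_\gamma$ (so $\tau_\gamma^N(\rho)\to[\gamma]$), and then propagates by moving to the next curve adjacent to the region where $c$ already has essential arcs; your bookkeeping via the $c$-component $V$ of the complement of the still-bad curves is just a cleaner way to say "move to a boundary component of $\sigma$ and continue." (One phrasing slip: the dichotomy in your third paragraph should be "some good curve lies in $V$" versus "no good curve lies in $V$," not "some good curve exists" versus "none exists," but the conclusion $Z=V$ covers the missing subcase, so the argument is complete.)
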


Here is how Gabai proves Proposition \ref{P.Gab}.  The surface $S$ is cut into pieces $P_1,\ldots,P_k$ by the $\alpha_j$'s, and glueing back along a fixed $\alpha_j$ gives a surface $\sigma$ that is at least as complex as a $4$-punctured sphere or a punctured torus; $\alpha_j$ is essential and non-peripheral on $\sigma$.  A generic choice $\gamma$ of a curve in $\sigma$ intersects $\alpha_j$ and all arcs of $c$ cutting $\sigma$ (to be more concrete, one can apply a high power of a pseudo-Anosov on $\sigma$ to $\alpha_j$ to get $\gamma$); now apply a high power of a Dehn twist in $\alpha_j$ to $\gamma$ to get a curve $\alpha_j\pr$ in $U_j$, and replace $\alpha_j$ with $\alpha_j\pr$.  To ensure positive intersection of every $\alpha_j\pr$ with $c$, one begins by modifying $\alpha_j$, chosen so that either $c$ meets $\sigma$ in an essential arc or else $c=\alpha_j$; after finding $\alpha_j\pr$, move to a boundary component $\alpha_{j\pr}$ of $\sigma$ and continue.  Since $i(c,\alpha_j\pr)>0$, continuity of $i(\cdot, \cdot)$ ensures that we can find a neighborhoods $U_j\pr \subseteq U_j$ such that $i(c,\beta)>0$ for any $\beta \in \cup_j U_j\pr$.

We proceed essentially in the same way, with our ``curves'' serving as surrogates of curves on a surface.  The analogue, for a tree $T$, of having positive intersection with every simple closed curve on a surface is that every factor acts with positive co-volume in $T$ (this is the same as every factor acting freely and simplicially on its minimal subtree of $T$ \cite{R12}), and our analogue of continuity of $i(\cdot, \cdot)$ is the continuity of the Kapovich-Lustig intersection function combined with continuity of the restriction map from the space of very small $\FN$-trees to the space of $H$-trees for $H$ a finitely generated subgroup of $\FN$.  Since our aim is to construct limiting trees that are both arational and non-geometric, we have two versions of Proposition \ref{P.Gab}; these are the two main technical steps in our argument and appear as Propositions \ref{P.AratLimit} and \ref{P.NonGeomLimit} below.  

This paper is organized as follows:  In Section \ref{S.Background} we give relevant background about trees, currents, and laminations.  In Section \ref{S.Arational} we give our analogue of Proposition \ref{P.Gab} that ensures arational limiting trees, while in Section \ref{S.NonGeometric} we give our analogue of Proposition \ref{P.Gab} that ensures non-geometric limiting trees.  Section \ref{S.Main} contains the proof the main result.  

\vspace{.2cm}

\noindent \emph{Acknowledgements: We wish to thank the participants, Mladen Bestvina in particular, of the Fall 2011 ``working group on $Out(\FN)$'' at the University of Utah for pleasant discussions about Lemma 0.}

\section{Background}\label{S.Background}

Fix a basis $\mathscr{B}$ for $\FN$.  Use $\barcvn$ to denote the set of very small $\FN$-trees; $\cvn$ denotes the subset of free and simplicial very small trees; and $\partial \cvn =\barcvn \ssm \cvn$ \cite{CL95, BF94}.  If $F\pr$ is another finite rank free group, we use $cv(F\pr)$ and $\partial cv(F\pr)$ to denote the corresponding spaces of $F\pr$-trees.  If $T$ is a very small tree, then $l_T$ denotes its length function; spaces of trees get the length functions topology, which coincides with the equivariant Gromov-Hausdorff topology.

\subsection{Currents and Laminations}

Use $\partial \FN$ to denote the boundary of some $T_0 \in \cvn$, and put $\partial^2 \FN := (\partial \FN \times \partial \FN \ssm diag.)/\mathbb{Z}_2$; this can be thought of as the space of unoriented geodesic lines in $T_0$.  The obvious action of $\FN$ on $\partial \FN$ gives an action of $\FN$ on $\partial^2 \FN$.  A \emph{lamination} is a non-empty, invariant, closed subset $L \subseteq \partial^2 \FN$.  A \emph{current} is a non-zero invariant Radon measure $\nu$ on $\partial^2 \FN$; the support of a current $\nu$, denoted $Supp(\nu)$, is a lamination; the set of currents gets the weak-* topology and is denoted by $Curr_N$.  There is a continuous funtion, called \emph{intersection}, $$\langle \cdot, \cdot \rangle:\barcvn \times Curr_N \to \mathbb{R}_{\geq 0}$$ that is homogeneous in the first coordinate and linear in the second coordinate \cite{KL09a}.

If $T \in \partial \cvn$, then either $T$ is not free or else $T$ is not simplicial, hence for any $\epsilon > 0$, there is $g \in \FN$ with $l_T(g) < \epsilon$.  Define $$L(T):= \cap_{\epsilon > 0} \overline{\{(g^{-\infty}, g^{\infty})|l_T(g)<\epsilon\}}$$  The set $L(T)$ is a lamination \cite{CHL08b}.  Kapovich and Lustig gave a complete characterization of when a tree and a current have intersection equal to zero: $\langle T, \nu \rangle =0$ if and only if $Supp(\nu) \subseteq L(T)$ \cite{KL10d}.

\subsection{Geometric Trees}

The topology on $\partial \cvn$ is metrizable, and we fix a compatible metric.  We record a lemma that follows immediately from the definition of the Gromov-Hausdorff topology.

\begin{lemma}\label{L.ContsRestrict}
 For any finitely generated $H\leq \FN$, the function $$\cvn \to cv(H):T \mapsto T_{H}$$ is continuous.
\end{lemma}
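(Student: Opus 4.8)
The plan is to reduce the statement to continuity of $T \mapsto l_T$ together with the elementary fact that translation lengths are unchanged by passing to a minimal invariant subtree. First I would check that the map is well-defined: if $T \in \cvn$ then $\FN$, and hence $H$, acts freely on $T$, so $H$ fixes no point and the minimal $H$-invariant subtree $T_H \subseteq T$ exists and is unique; since $T$ is simplicial and $H$ is finitely generated, $T_H$ is a minimal, free, simplicial $H$-tree with trivial edge stabilizers, so $T_H \in cv(H)$.

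The key point is that for every $h \in H$ one has $l_{T_H}(h) = l_T(h)$. Indeed, every nontrivial element of $H$ is hyperbolic in $T$ because the action is free, $T_H$ is the union of the axes of these elements, and the axis of $h$ in $T$ is contained in $T_H$; hence the translation length of $h$ computed in $T_H$ agrees with the one computed in $T$. Equivalently, the length function of the $H$-tree $T_H$ is the restriction $l_T|_H$ of the length function of $T$.

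Finally I would invoke that the topology on spaces of trees is the length-functions topology, which (as recalled above) coincides with the equivariant Gromov--Hausdorff topology. The assignment $T \mapsto (l_T(g))_{g \in \FN}$ is continuous by definition of this topology, so composing with the coordinate projection onto the factors indexed by $H$ shows that $T \mapsto (l_T(h))_{h \in H} = l_{T_H}$ is continuous as a map into $\mathbb{R}^H$. Since a minimal $H$-tree is determined by its length function and $cv(H)$ carries the length-functions topology, this is precisely the asserted continuity of $T \mapsto T_H$.

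The only step requiring any care — and it is essentially the ``definition of the Gromov--Hausdorff topology'' that the statement alludes to, once one uses the equivalence with the length-functions topology — is the invariance of translation length under restriction to the minimal subtree; everything else is formal. One could instead argue directly with equivariant Gromov--Hausdorff $\e$-approximations, pushing a given approximation between $T$ and a nearby $T'$ forward to one between $T_H$ and $T'_H$ via nearest-point projection onto the minimal subtrees, but the length-function argument is shorter and avoids estimating distortion under projection.
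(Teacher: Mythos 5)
Your proof is correct. The paper does not actually supply an argument---it simply records the lemma as something that ``follows immediately from the definition of the Gromov--Hausdorff topology.'' Your route via length functions is the natural equivalent, since (as the paper itself notes) the length-functions topology and the equivariant Gromov--Hausdorff topology coincide on these spaces of trees. The crux is exactly what you identify: the length function of a minimal invariant subtree agrees with the restricted length function, so $l_{T_H} = l_T|_H$, and then continuity of $T \mapsto T_H$ is inherited from continuity of $T \mapsto l_T$ via the coordinate projection $\mathbb{R}^{\FN} \to \mathbb{R}^H$ together with the fact that a minimal $H$-tree is determined by its length function (Culler--Morgan). This is cleaner than the direct Gromov--Hausdorff route you sketch in your last paragraph, which would require controlling nearest-point projection onto the minimal subtree; your version sidesteps that entirely. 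One small remark for completeness: your axis argument establishes $l_{T_H}(h)=l_T(h)$ when $h$ is hyperbolic, which suffices on $\cvn$ since the action is free; but the lemma as actually used later in the paper (near a curve $T_1 \in \partial\cvn$) needs the same identity when $h$ is elliptic too, which also holds---if $h$ were hyperbolic in $T_H$ its axis would be an invariant line in $T$, forcing $h$ to be hyperbolic there as well.
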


%\begin{lemma}\label{L.covol}
% Let $F'$ be a factor, the function $\Sigma \to \mathbb R :T \mapsto covol(F')$ is upper semi-continuous.  
%\end{lemma}

%\begin{proof}
% Let $T_k \to T$, and let $v=covol(T)$.  For any $\epsilon >0$, there is a basis $\{g_i\}$ and a point $x \in T$ such that the measure of the convex hull $K$ of $\{g_i^{\pm}x\}$ is $\epsilon$-close to the $covol(T)$.  Hence, for $k$ large, and $x_k$ approximating $x$, the $\mu_k$-measure of the convex hull $K_k$ of $\{g_i^{\pm}x_k\}$ is close to the $\mu$-measure of $K$.
%\end{proof}

%\noindent We immediately get:

%\begin{cor}\label{C.AratLimit}
% Let $\{T_k\}$ be a set of very small trees such that for every factor $F\pr$, there is $\epsilon(F\pr)$ and $k(F\pr)$ such that for $k \geq k(F\pr)$, $covol(F\pr) \geq \epsilon(F\pr)$, then any limit of $\{T_k\}$ is arational.
%\end{cor}

Considering the Gromov-Hausdorff topology, one gets a function $\cvn \ni T \mapsto x \in T$ that is ``continuous'' in the following sense: given a finite subset $S \subseteq \FN$ and $\epsilon >0$, there is a $\delta >0$ so that if $T\pr$ is $\delta$-close to $T$, then the partial action of $S$ on the convex hull of $Sx_T$ is $\epsilon$-approximated by the partial action of $S$ on the convex hull of $Sx_{T\pr}$; this point is explained in \cite{Sko90}, in particular, see Skora's discussion of Proposition 5.2.  We call this function a \emph{continuous choice of basepoint} on $\cvn$.

We very quickly recall some notions around geometric trees; see \cite{BF95} for details.  Every $T \in \partial \cvn$ admits \emph{resolutions} as follows: fix $x \in T$ and let $B_n$ be the $n$-ball in the Cayley tree for $\FN$ with respect to $\mathscr{B}$.  The partial isometries induced by elements of $\mathscr{B}$ on the convex hull $K(T,x_T,n)$ of $B_nx$ form a pseudo-group, which can be suspended to get a band complex $Y=Y(T,x,n)$, which is dual to a very small tree $T_n$, and $T_n \to T$ as $n \to \infty$.  The geometric trees $T_n$ come with morphisms $f_n:T_n \to T$, and $T$ is geometric if and only if $f_n$ is an isometry for $n>>0$.  The band complex $Y$ decomposes via Imanishi's theorem into a union of finitely many maximal families of parallel compact leaves, called \emph{families}, and finitely many minimal components, which are glued together along singular leaves; each family $\mathcal C$ has a well-defined \emph{width}, denoted $w(\mathcal C)$, see also \cite{GLP94}.  The family $\mathcal C$ is called a \emph{pseudo-annulus} if every leaf contains an embedded copy of $S^1$; the family $\mathcal C$ is called \emph{non-annular} if it is not a pseudo-annulus.

\subsection{Dehn Twists}

Let $T$ be a curve; for simplicity, assume $T/\FN$ is a circle.  Choosing an edge $e$ in $T$ gives $\FN$ the structure of an HNN-extension $\FN=\langle a_1,\ldots,a_{N-1},t,w'|w'=w^t\rangle$, where $w \in \langle a_1,\ldots,a_{N-1}\rangle$.  The subgroup $V=\langle a_1,\ldots,a_{N-1},w'\rangle$ is the stabilizer of one of the endpoints of $e$, and the subgroup $\langle w \rangle$ is the stabilizer of $e$.  The element $t$ is called the stable letter for this HNN-structure.  Given this choice of $e$ one gets a \emph{Dehn twist automorphism} $\tau$ of $\FN$, defined by $\tau(t)=tw$ and $\tau(a_i)=a_i$; the element $w$ is called the \emph{twistor}.  The class of $\tau$ in $Out(\FN)$ does not depend on the choice of $e$ and also is called a Dehn twist automorphism.  Cohen and Lustig prove the following; see \cite[Theorem 13.2]{CL95}.

\begin{prop}\cite{CL95}\label{P.Twist}
 Let $\tau$ be a Dehn twist with twistor $w$ corresponding to a curve $T$.  If $T\pr \in \partial CV_N$ satisfies $l_{T\pr}(w)>0$, then $\lim_{k \to \pm \infty} T\pr\tau^k=T$.
\end{prop}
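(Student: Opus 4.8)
The plan is to reduce the statement to the convergence of projectivized length functions and thence to a single translation-length estimate inside $T\pr$. With the standard convention for the action of $Out(\FN)$ (and, if necessary, after replacing $\tau$ by $\tau\inv$, which is the Dehn twist with twistor $w\inv$ associated to the same curve), one has $l_{T\pr\tau^k}(g)=l_{T\pr}(\tau^k(g))$ for every $g\in\FN$. Convergence in $\partial CV_N$ is by definition convergence of projectivized length functions, equivalently equivariant Gromov--Hausdorff convergence; since $T$ is a curve, $l_T$ is a non-zero length function and $T\in\partial CV_N$. Hence it suffices to produce scalars $\lambda_k>0$ with $\lambda_k\inv\,l_{T\pr}(\tau^k(g))\to l_T(g)$ for all $g\in\FN$.

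Fix the edge $e$ giving the HNN-structure $\FN=\langle V,t\mid w\pr=w^t\rangle$ of the statement; then $\tau(t)=tw$, $\tau$ restricts to the identity on $V$ (one checks $\tau(w\pr)=w\pr$), and so $\tau^k(t)=tw^k$ for all $k\in\Z$. If $g$ is elliptic in $T$, i.e. conjugate into $V$, then $\tau^k(g)$ is conjugate to a fixed element of $V$, so $l_{T\pr}(\tau^k(g))$ is bounded independently of $k$, which matches $l_T(g)=0$. Otherwise choose a cyclically reduced HNN normal form $g=v_0 t^{\e_1}v_1 t^{\e_2}\cdots t^{\e_n}v_n$ with $v_i\in V$ and $n=l_T(g)\geq1$; applying $\tau^k$ yields
\begin{equation}\label{E.TwistNF}
 \tau^k(g)=v_0 (tw^k)^{\e_1}v_1(tw^k)^{\e_2}\cdots(tw^k)^{\e_n}v_n,
\end{equation}
so $\tau^k(g)$ is obtained from $g$ by inserting $n$ copies of $w^{\pm k}$.

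The technical heart is the estimate $l_{T\pr}(\tau^k(g))=|k|\,\ell\,l_T(g)+O_g(1)$, where $\ell:=l_{T\pr}(w)>0$ and the implied constant is independent of $k$; granting this, $\lambda_k:=|k|\,\ell$ works and the Proposition follows. Let $A$ be the axis of $w$ in $T\pr$. The claim is that, for $|k|$ large, a fundamental domain for the action of $\tau^k(g)$ on $T\pr$ decomposes as a concatenation of $l_T(g)$ long sub-arcs --- the $i$-th a sub-arc of length $|k|\ell+O(1)$ of some translate $p_iA$ of $A$, with the $p_i$ ranging over a fixed finite set determined by \eqref{E.TwistNF} --- glued consecutively along bridge-arcs of length $O(1)$, independent of $k$. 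Since there are only finitely many relevant translates $p_iA$, any two of them meet in an arc of $k$-independent length, so the long sub-arcs can overlap only in a bounded amount; and the only positions at which two consecutive long sub-arcs could lie on the same translate of $A$ with opposite orientations --- which would produce cancellation --- are those with $\e_i=+1,\e_{i+1}=-1$ or $\e_i=-1,\e_{i+1}=+1$, where reducedness of the normal form forces the intervening $v_i$ outside $\langle w\rangle$, respectively outside $\langle w\pr\rangle$. Because $w$ is primitive, $\langle w\rangle$ is maximal cyclic, and since $\FN$ is torsion-free and $T\pr$ is very small an easy argument then shows that the setwise stabilizer of $A$ in $T\pr$ is exactly $\langle w\rangle$; it follows that the conjugates of $w^{\pm k}$ occurring at adjacent positions translate along genuinely distinct translates of $A$, ruling out cancellation, and the matching upper bound (no extra accumulation of length) follows the same way. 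This gives the estimate, with proportionality constant $\ell$ independent of $g$.

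The main obstacle is precisely this axis bookkeeping --- converting the heuristic ``$\tau^k(g)$ wraps $l_T(g)$ times around $A$ and its translates, each wrap of length $\approx|k|\ell$, efficiently and without backtracking'' into the clean estimate uniformly in $k$. This is the substance of the proof of \cite[Theorem 13.2]{CL95}. It is the tree analogue of the classical computation that a Dehn twist $T_c$ along a simple closed curve $c$ satisfies $i(T_c^k d,e)=|k|\,i(c,d)\,i(c,e)+O(1)$, so that $T_c^k d\to c$ in $\mathcal{PML}$ as soon as $i(c,d)>0$; in the dictionary, $l_T(\cdot)$ plays the role of $i(c,\cdot)$ and $l_{T\pr}(w)=\ell$ the role of $i(c,d)$.
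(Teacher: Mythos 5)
The paper gives no proof of this proposition: it is stated with a citation to \cite[Theorem 13.2]{CL95} and used as a black box, so there is no in-paper argument to compare against. Your sketch is a faithful outline of the Cohen--Lustig strategy, and the estimate $l_{T\pr}(\tau^k(g))=|k|\,\ell\,l_T(g)+O_g(1)$ is indeed the crux; once one has it, the reduction via projectivized length functions that you open with finishes the proof.

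Two remarks worth recording. First, the formula $\tau(w\pr)=w\pr$ depends on whether $w^t$ denotes $twt\inv$ or $t\inv wt$; with the latter convention one only gets $\tau(w\pr)=w\inv w\pr w$. This is harmless --- all that matters is that $\tau$ restricts to an inner automorphism of $V$, so conjugacy classes of elliptics, hence their $T\pr$-lengths, are unchanged --- but the claim as written is convention-sensitive. Second, and more substantively, the step you dispatch with ``an easy argument then shows'' is the one carrying the hypotheses: that $\Stab_{T\pr}(A)=\langle w\rangle$ uses that arc stabilizers in a very small tree are cyclic (so the pointwise stabilizer of $A$ is cyclic), that $\FN$ is torsion-free and contains no $\Z^2$ (so the setwise stabilizer, an extension of a subgroup of $\mathrm{Isom}(\mathbb{R})$ by that kernel, is cyclic), and that $w$ is primitive (so $\langle w\rangle$ is maximal cyclic, forcing equality). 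With that in hand, Britton's lemma rules out the cancellation you identify, and the count goes through. Also, the matching upper bound is in fact the easy direction --- it is a triangle-inequality count of $n$ pieces of length at most $|k|\ell+O(1)$ each --- and does not ``follow the same way'' as the lower bound; the lower bound is where the axis bookkeeping is genuinely needed.
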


If $T\pr$ satisfies the hypotheses of Proposition \ref{P.Twist}, then we simply say that $T\pr$ \emph{intersects} $T$.  We call $\tau$ as above the Dehn twist associated to the curve $T$; in light of Proposition \ref{P.Twist}, the ambiguity of replacing $\tau$ with $\tau^{-1}$ is not important.  Notice that if $T\pr$ is a curve that is disjoint from $T$, then $T\pr\tau=T\pr$.  If $\tau$ is the Dehn twist associated to a curve $T$, then we write $\tau=\tau(T)$; dually, we define  $T_{\tau}$ to be the unique curve satisfying $\tau(T_{\tau})=\tau$.

\section{Forcing Arational Limits}\label{S.Arational}

Here is the first part of our adaptation of Gabai's procedure; we use this result to construct arational trees.  Throughout this section, we blur the distinction between a factor and its representatives, arguing with subgroups and their conjugacy classes as needed.

\begin{prop}\label{P.AratLimit}
Let $F$ be a factor, and let $T$ and $T\pr$ be disjoint curves with neighborhoods $T \in U$, $T\pr \in U'$.  There are disjoint curves $T_1, T_1\pr$ with neighborhoods $T_1 \in U_1 \subseteq U$, $T_1\pr \in U_1\pr \subseteq U\pr$, such that for any $S \in U_1 \cup U_1\pr$, $S_F$ is free and simplicial. 
\end{prop}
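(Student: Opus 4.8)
The plan is to mimic Gabai's argument from Proposition~\ref{P.Gab}, replacing ``positive intersection with a curve $c$'' by ``positive co-volume of the factor $F$'', and replacing continuity of the geometric intersection number by the combination of continuity of the Kapovich--Lustig intersection function $\langle \cdot, \cdot \rangle$ (from \cite{KL09a}) and continuity of the restriction map of Lemma~\ref{L.ContsRestrict}. The key point to isolate first is a ``one curve'' lemma: given a factor $F$ and a curve $T$, there is a curve $T_1$ arbitrarily close to $T$ such that $F$ acts freely and simplicially on $(T_1)_F$, and moreover this property is \emph{open} in $T_1$, i.e.\ survives on a neighborhood. Once this lemma is available, I would apply it to $T$ and to $T\pr$ in turn (shrinking neighborhoods so that disjointness is preserved --- disjointness is an open condition since it says $l_S + l_{S\pr}$ is the length function of a very small tree, equivalently lies in the closed cone $\barcvn$ viewed inside length-function space, but we can also simply keep $T_1, T_1\pr$ in the open sets of curves disjoint from each other that contain $T, T\pr$), to get the desired $T_1 \in U_1 \subseteq U$, $T_1\pr \in U_1\pr \subseteq U\pr$.

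For the one curve lemma, fix $F$ and the curve $T$. Following Gabai, the idea is: if $F$ does \emph{not} act freely and simplicially on $T_F$, then $T_F$ (equivalently, looking at the $F$-minimal subtree inside $T$) has $covol_F = 0$, which by the Kapovich--Lustig criterion \cite{KL10d} means $Supp(\nu_F) \subseteq L(T)$ for a suitable current $\nu_F$ carried by $F$; concretely there is a $w \in F$ with $l_T(w) = 0$. Since $T$ is a curve with twistor, the relevant degeneracy is detected by the stabilizer subgroup $V$ of a vertex or the edge stabilizer $\langle w\rangle$; the situation where $F$ fails to be free-and-simplicial on $T_F$ is exactly when $F$ (up to conjugacy) is ``compatible'' with $T$ in a way we can destroy by twisting. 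So: choose a pseudo-Anosov-like fully irreducible $\phi \in Out(\FN)$ supported so that no factor is $\phi$-periodic, or more simply choose a Dehn twist $\psi = \tau(T\pr\pr)$ along a curve $T\pr\pr$ that intersects $T$ and such that $F$ intersects $T\pr\pr$ (such a $T\pr\pr$ exists because $N \ge 4$ gives enough room; this is the analogue of Gabai's ``generic $\gamma$''). Then replace $T$ by $T_1 := $ a curve of the form $T' \psi^{m}$ pushed near $T$ by applying a high power of $\tau(T)$: explicitly, letting $\tau = \tau(T)$, the curves $(T\pr\pr \psi^m)\tau^k$ converge to $T$ as $k \to \infty$ by Proposition~\ref{P.Twist} (once $\psi^m(T\pr\pr)$ intersects $T$), and for large $m$ the factor $F$ acts freely and simplicially on the corresponding $F$-subtree; since $\tau$ fixes $F$ up to conjugacy (when $F$ is disjoint from $T$) or at least acts on the space of $F$-trees, free-and-simplicial is preserved under $\tau^k$. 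Choosing $k$ large puts $T_1 := (T\pr\pr\psi^m)\tau^k$ in $U$.

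The openness of the conclusion is where Lemma~\ref{L.ContsRestrict} and continuity of $\langle\cdot,\cdot\rangle$ enter: ``$F$ acts freely and simplicially on $(S)_F$'' is equivalent, by \cite{R12}, to $covol_F(S) > 0$, which in turn (via \cite{KL10d}) is equivalent to $\langle S_F, \nu\rangle > 0$ for every current $\nu$ carried by $F$ --- or, after passing to the finitely many ``vertices'' of the simplex of currents of a free simplicial $F$-tree, to positivity on a compact set of currents. Since $S \mapsto S_F$ is continuous ($cv(F)$-valued) by Lemma~\ref{L.ContsRestrict} and $\langle\cdot,\cdot\rangle$ is continuous, the set of $S$ with $\langle S_F,\nu\rangle>0$ for all $\nu$ in that compact set is open; hence there is a neighborhood $U_1 \ni T_1$ on which the conclusion holds. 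Intersecting with $U$ (and with the open set of curves, and with the open set of curves disjoint from the corresponding $T_1\pr$) gives $U_1 \subseteq U$.

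I expect the main obstacle to be the combinatorial heart of the one curve lemma: verifying that a ``generic'' choice --- a high power of a twist along an auxiliary curve $T\pr\pr$ chosen to intersect both $T$ and $F$ --- actually forces $F$ to become free and simplicial on its minimal subtree, rather than merely changing the failure locus. This is the exact analogue of Gabai's step where $\gamma$ must cross $\alpha_j$ and all arcs of $c$, and as in his argument it requires a careful case analysis according to how $F$ sits relative to the splitting $T$ (whether a conjugate of $F$ meets the vertex group $V$, the edge group $\langle w\rangle$, etc.), using that $N \ge 4$ provides a curve $T\pr\pr$ disjoint from neither $T$ nor $F$. Establishing that such a $T\pr\pr$ exists for every factor $F$ and every curve $T$, and that twisting along it genuinely destroys the degeneracy, is the technical crux; everything else is soft continuity and openness.
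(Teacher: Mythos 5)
Your high-level plan (adapt Gabai, get openness from Lemma~\ref{L.ContsRestrict} plus the fact that ``free and simplicial'' is open) matches the paper's strategy, and the openness argument you sketch is essentially what the paper does, though the paper uses it more economically: it simply observes that $S \mapsto S_F$ is continuous and $cv(F)$ is open in $\overline{cv}(F)$, with no need to route through co-volume, the Kapovich--Lustig criterion, or a compact family of currents.

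The proposal has two genuine gaps, however. First, the reduction to a ``one curve'' lemma does not work as stated, because disjointness of two curves is a \emph{closed} condition, not an open one: it asserts that $l_S + l_{S\pr}$ lies in the closed cone of length functions of very small trees, and there is no open set of curves disjoint from a fixed curve. (You in fact name the correct reason --- membership in a closed cone --- and then draw the opposite conclusion.) Consequently you cannot perturb $T$ and $T\pr$ independently and then intersect with ``the open sets of curves disjoint from each other''; you must deform $T$ and $T\pr$ \emph{in tandem} so that at every stage they still admit a common refinement. This is precisely why the paper's proof proceeds in three coupled steps, constructing $T_{1/2}$, $T\pr_{1/2}$, $T_1$, $T\pr_1$ successively while tracking the vertex group of the common refinement ($A$, $A_{1/2}$, $A\pr_{1/2}$, \ldots) and choosing all automorphisms to extend over this refinement. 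Second, the combinatorial heart --- forcing $F$ off the vertex groups --- is left as a heuristic (``twist along a generic auxiliary curve $T\pr\pr$ meeting both $T$ and $F$''), which you yourself flag as the crux. The paper's actual mechanism is different and more structured: after a preliminary normalization, it applies Howson's theorem to get finitely many conjugacy classes of intersection of $F$ with a rank-$(N-2)$ subfactor, then applies a high power of a fully irreducible automorphism of that subfactor (extended trivially to $\FN$) to push those intersections into words forced to contain the letters that obstruct lying in a vertex group, then uses the Dehn twist $\tau(T)$ (which fixes $V$ and so cannot reintroduce bad intersections) to return to $U$, and iterates. Without this or a comparable concrete mechanism, and without a way to preserve disjointness, the proposal does not constitute a proof.
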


\begin{proof}
We will do the proof in the case where both $T$ and $T\pr$ are splittings with one loop-edge, and the common refinement is a graph with one vertex and two loop edges. The other cases are similar, are easier, and are left as exercises to the reader.

Let $V$ and $V\pr$ be the vertex groups of $T$ and $T\pr$, respectively, and let $A$ be the vertex group of the refinement. Let $w$ and $w\pr$ be the generators of the edge groups of $T$ and $T\pr$ respectively, and let $t$ and $t\pr$ be the respective stable letters. Let $A = \langle a_1, \ldots, a_{N-3},w\pr,w^t, w^{\prime t\pr} \rangle $ with $w \in \langle a_1, \ldots, a_{N-3}\rangle $, and where $\{a_1, \ldots, a_{N-3}, t,t\pr, w\pr\}$ is a basis for $F_N$.  Note that $V = \langle A,t\pr\rangle $ and $V\pr = \langle A,t\rangle $. 

It is not the case that $F$ can contain $V$ or $V\pr$; indeed, both $V$ and $V\pr$ strictly contain co-rank 1 factors. 

%If $H \leq \FN$ satisfies $K=\tau(H) \cap V \neq \{1\}$, then since $\tau|_V=id_V$, one has that the intersection $\tau^k(H) \cap V$ is non-trivial (and equal to $K$) for any $k \in \mathbb{Z}$.  ....... Notice that by Proposition \ref{P.Twist} and the discussion following it if we can find a curve $\mathfrak{T}$ disjoint from $T\pr$ and intersecting $T$ such that $\mathfrak{T}_F$ is free and simplicial, then we can find $T_1 \in U$ with these properties; indeed, else for all $|k|>>0$, $(\mathfrak{T}\tau^k)_F=\mathfrak{T}_{\tau^k(F)}$ would not be free.........

We need to modify $T,T\pr$ to get new curves in $U,U\pr$, respectively, so that $F$ does not intersect the vertex groups of the new curves.  We accomplish this in several steps, each of which removes certain kinds of intersections; we will appeal to Proposition \ref{P.Twist} to move curves into $U,U\pr$.. 

First suppose that $F$ contains (after choosing a conjugacy representative) $\langle a_1, \ldots, a_{N-3}, t\rangle $. Note that $F$ cannot also contain both $w\pr$ and $t\pr$, or else $F = F_N$. Let $\e$ be whichever of these letters is not contained in $F$. Let $f$ be the automorphism that sends $a_1 \mapsto a_1\e$ and that is the identity on the other basis elements. Replace $T$ with the tree $Tf^{-1}$, and replace $a_1$ by $f(a_1)$; note that $Tf^{-1}$ satisfies the hypotheses of the proposition. Furthermore, $F$ does not contain $\langle a_1, \ldots, a_{N-3}, t\rangle $.

\vspace{.2cm}

\noindent {\bf Step 1:} By Howson's Theorem (see \cite{St83}), there are only finitely many conjugacy classes of intersection of $F$ with $\langle a_1, \ldots, a_{N-3}, t\rangle $. Hence, by applying a sufficiently high power of a fully irreducible automorphism on the factor $\langle a_1, \ldots, a_{N-3}, t\rangle $, say $\phi$, and extending $\phi$ to $F_N$ by sending $w\pr \mapsto w\pr$ and $t\pr \mapsto t\pr$, we can guarantee that in the tree $T\phi^{-1} := T_{1/2}$, conjugates of $F$ intersect the vertex group $\phi (V) = V_{1/2} = \langle \phi(a_1), \ldots, \phi(a_{N-3}), w\pr, t\pr, \phi(w^t)\rangle $ only in elements which contain instances of $w\pr$, $t\pr$, and $\phi(w^t)$ (i.e. no element in the intersection can be contained in any subfactor of $\phi(\langle a_1, \ldots, a_{N-3}, t\rangle )$.). 

The edge group of $T_{1/2}$ is generated by $\phi(w)$, whose reduced form must be a word containing some instances of $t$, and hence it is hyperbolic in $T$. Also, by construction of $\phi$, $T_{1/2}$ remains commonly refined with $T\pr$. The vertex group of the refinement is $\phi(A) = A_{1/2}$.

By Proposition \ref{P.Twist}, we can apply a high power of the Dehn twist $\tau=\tau(T)$ to $T_{1/2}$ to move $T_{1/2}$ into $U$; we use $T_{1/2}$ to denote this new curve as well; note that by applying $\tau$ we could not introduce new intersections of $F$ with $V_{1/2}$ that meet $V$ non-trivially, as $V$ is fixed by $\tau$. 

\vspace{.2cm}

\noindent {\bf Step 2:} Now we perturb the tree $T\pr$. Consider an automorphism $\phi$ of the factor $\langle w\pr, t\pr, t\rangle $ which sends $w\pr \mapsto w\pr \phi(t)$, $t\pr \mapsto t\pr \phi(t)$ and $\phi(t) \mapsto \phi(t)$. Extend $\phi$ to $F_N$. Since the intersection of any conjugate of $F$ with $V_{1/2}$ cannot contain $\phi(t)$, it follows that conjugates of $F$ intersect the $A_{1/2}$ only in elements whose reduced form must contain instances of $\phi(w^t)$ (that is, any elements in the intersection of $F$ with $\langle \phi(a_1), \ldots, \phi(a_{N-3}), \phi(w\pr), \phi(w^t), \phi(w^{\prime t\pr})\rangle $ cannot be contained in $\langle \phi(a_1), \ldots, \phi(a_{N-3}), \phi(w\pr), \phi(w^{\prime t\pr})\rangle $). 

Denote the tree $T\pr\phi^{-1}$ by $T\pr_{1/2}$, so $T_{1/2}\pr$ is disjoint from $T_{1/2}$. Denote by $A_{1/2}\pr:=\phi (A_{1/2})$ the vertex group of the common refinement.

\vspace{.2cm}

\noindent {\bf Step 3:} Now we go back to $T_{1/2}$. By the remark at the end of Step 2, if we apply an automorphism $g$ of $V_{1/2}= \langle A\pr_{1/2},\phi(t\pr)\rangle $ by $t \mapsto t \phi(t\pr)$ and extending to $\FN$, in the resulting tree $T_{1/2}g^{-1} = T_1$, no conjugate of $F$ non-trivially intersects the vertex group $V_1=g(V_{1/2})$.

Again, using Proposition \ref{P.Twist} with $\tau=\tau(T_{1/2})$, we can $T_1$ from the previous paragraph into $U$; call the new curve $T_1$ as well. By construction, $F$ does not meet the vertex group of $T_1$ non-trivially.

Repeat the same process for $T\pr_{1/2}$ to obtain a tree $T\pr_1$ in $U'$, with $T_1\pr$ disjoint from $T_1$ and such that $F$ does not non-trivially intersect the vertex group of $T_1\pr$.

To finish we apply Lemma \ref{L.ContsRestrict} along with the fact that $cv(F)$ is open in $\overline{cv}(F)$ to find neighborhoods $U_1,U_1\pr$ with $T_1 \in U_1 \subseteq U$ and $T_1\pr \in U_1\pr \subseteq U'$, as desired.

%\begin{lemma}
%Suppose $T \in \overline{CV_n}$ and $H$ is a proper free factor. Suppose also that the minimal subtree of $H$ in $T$, $T_H$ is free simplicial. Then there is a neighborhood $U = U(T,H)$ of $T$ in $\overline{CV_n}$ so that if $S \in U$ then $S_H$ is free, simplicial.
%\end{lemma}

%\begin{proof}
%The result follows from the fact that the length function topology on $CV_n$ is the same as the Gromov-Hausdorff topology. 
%\end{proof}

%Now we are in position to finish the proof of the proposition. Choose curves $T_1$ and $T\pr_1$ as above. By proposition 3, each has a neighborhood where the minimal subtree of $H$ is free, simplicial. Denote by $W$ and $W\pr$ the intersection of these neighborhoods with $U$ and $U\pr$ respectively. These neighborhoods satisfy the conclusion of the proposition.

\end{proof}

\section{Forcing Non-Geometric Limits}\label{S.NonGeometric}

In this section we bring the second part of our adaptation of Gabai's procedure; we use the main result of this section to construct non-geometric trees as limits of curves.  The reader is assumed to be familiar with Rips theory \cite{BF95}.  First, we record the following:

\begin{lemma}\label{L.Stable}
 Suppose that $Y(T,x_T,n)$ contains a non-annular family of width $w$.  For any $\epsilon>0$, there is $\delta>0$ such that if $T\pr$ is $\delta$-close to $T$, then $Y(T\pr,x_{T\pr},n)$ contains a non-annular family of width $w-\epsilon$.
\end{lemma}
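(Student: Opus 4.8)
The plan is to exploit the continuous-choice-of-basepoint and the band-complex picture already set up in the Background section, and to argue that a non-annular family of definite width is an ``open'' feature of the band complex $Y(T,x_T,n)$. First I would fix the finite data: $n$, the basis $\mathscr{B}$, and a continuous choice of basepoint $T \mapsto x_T$ on $\cvn$. The convex hull $K(T,x_T,n)$ of $B_n x_T$ is a finite tree, and the partial isometries of $\mathscr{B}$ on it determine the band complex $Y(T,x_T,n)$ up to a choice of how the finitely many branch points and partial-isometry domains sit with respect to one another. By the continuity statement quoted from Skora, for any $\epsilon' > 0$ there is $\delta' > 0$ such that when $T\pr$ is $\delta'$-close to $T$, the partial action of $\mathscr{B}$ on $K(T\pr,x_{T\pr},n)$ is $\epsilon'$-approximated by that on $K(T,x_T,n)$; concretely, there is a finite-to-one surjection of the branch/endpoint structure of one onto that of the other, respecting the combinatorics of which partial isometries are defined where, and moving every relevant point by less than $\epsilon'$.

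Next I would translate the hypothesis into combinatorial terms via Imanishi's theorem: the decomposition of $Y(T,x_T,n)$ into maximal families of parallel compact leaves and minimal components is determined by the combinatorics of the partial isometries together with the metric lengths of their domains of overlap, and the width $w(\mathcal C)$ of a family is computed as a (positive) linear combination, with integer-like coefficients coming from the leaf-parallelism pattern, of these overlap lengths. Given a non-annular family $\mathcal C$ of width $w$ in $Y(T,x_T,n)$, I would choose $\epsilon'$ small enough (relative to $\epsilon$, to $w$, and to the minimal gap between distinct branch points and between the finitely many overlap lengths occurring in $Y(T,x_T,n)$) that for $T\pr$ $\delta'$-close to $T$ the perturbed band complex $Y(T\pr,x_{T\pr},n)$ still exhibits, among its families, one whose leaf-parallelism combinatorics agree with those of $\mathcal C$ — in particular it is again non-annular, since the property ``every leaf contains an embedded $S^1$'' versus not is a combinatorial property of which bands are glued to which and is stable under a sufficiently small perturbation that does not collapse any overlap to zero or create a new overlap — and whose width is within $\epsilon$ of $w$, hence at least $w - \epsilon$. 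Setting $\delta = \delta'$ then finishes the proof.

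The main obstacle I anticipate is making precise, and robust under perturbation, the two combinatorial facts I am leaning on: first, that the Imanishi decomposition (and with it the annular/non-annular dichotomy for a given family) is locally constant as long as the perturbation is too small to merge or destroy any of the finitely many domain-overlap intervals; and second, that the width, as a function of the perturbed partial-isometry data, is continuous — this requires checking that the linear formula for $w(\mathcal C)$ in terms of overlap lengths has coefficients that do not jump when the combinatorial type is preserved. Neither is deep, but both require a careful bookkeeping of the finitely many metric quantities (branch-point positions, endpoints of partial-isometry domains) attached to $K(T,x_T,n)$ and a uniform lower bound on the nonzero gaps among them, which is available precisely because $K(T,x_T,n)$ is a fixed finite tree for the fixed value of $n$. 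I would also remark that one does not need the full strength of Imanishi's theorem here, only that the width of a fixed family is a continuous function of the band-complex data, so if the combinatorial tracking becomes cumbersome I would fall back on estimating $w(\mathcal C)$ directly as a sum of lengths of the overlap arcs making up $\mathcal C$ and noting each such length varies by at most $O(\epsilon')$.
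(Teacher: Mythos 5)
Your primary route --- arguing that the Imanishi decomposition of $Y(\cdot,\cdot,n)$ is ``locally constant'' in $T$ so that the whole family $\mathcal{C}$ persists with its combinatorics intact --- contains a real gap. It is not true in general that a small perturbation of the partial isometries preserves the Imanishi decomposition: coincidences among branch points can be broken, overlap intervals between domains of partial isometries can shrink or disappear, a family can shed a thin minimal component off its boundary, or two nearby families can merge. Your hedge (``as long as the perturbation is too small to merge or destroy any of the finitely many domain-overlap intervals'') is exactly the assertion that needs to be \emph{proved}, and you do not give an argument for it. Nor do you need it: the lemma only asks for \emph{some} non-annular family of width at least $w-\epsilon$ in $Y(T\pr,x_{T\pr},n)$, not for a family with the same combinatorics as $\mathcal{C}$. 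There is also a smaller but telling inaccuracy: the width $w(\mathcal{C})$ is not a positive linear combination of overlap lengths; it is just the common length of the arcs $b_0,\dots,b_k$ in which $\mathcal{C}$ meets $K(T,x_T,n)$.

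Your fall-back sketch at the end is much closer to the paper's actual argument, which avoids any claim about stability of the Imanishi decomposition. The paper fixes the finitely many arcs $b_0,\dots,b_k$ of $\mathcal{C}$ in $K(T,x_T,n)$ and the partial isometries $\phi_{i,i+1}$ carrying $b_i$ to $b_{i+1}$, obtains a $(1+\eta)$-bi-Lipschitz $B_n$-equivariant comparison map $f$ to $K(T\pr,x_{T\pr},n)$ for $T\pr$ close to $T$, and then \emph{directly constructs} a central sub-segment $J_0\subseteq b_0\pr=f(b_0)$ of length at least $w/(1+\eta)-2k\eta(1+\eta)$ whose $\phi_{0,j}\pr$-orbit stays in the central parts of the $b_j\pr$ and so never meets an endpoint of any $\dom(\phi_{i,i+1}\pr)$; the error accumulates linearly in $k$, which is why $\eta$ is chosen with $2k\eta(1+\eta)\ll\epsilon$. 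Non-annularity and the width bound then follow because, for $T\pr$ close enough to $T$, the only partial isometries defined on the endpoints of the orbit of $J_0$ are the adjacent $\phi_{i,i+1}\pr$ (the same property that held in $Y(T,x_T,n)$), so the leaves through $J_0$ are compact arcs contained in a single family. To repair your proposal, abandon the global combinatorial-stability claim, commit to the fall-back, and carry out the bookkeeping of the accumulating $O(k\eta)$ error along the chain $b_0\to b_1\to\cdots\to b_k$.
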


\begin{proof}
 Let $\mathcal{C}$ be a non-annular family of width $w$. Suppose $\mathcal{C}$ intersects $K(T,x_T,n)$ in the intervals $b_0, b_1, b_2, \ldots, b_k$. Use $\phi_{i,j}$ to denote the partial isometry, which corresponds to an element of $\mathscr{B}^{\pm}$, that maps $b_i$ to $b_j$. 

 As $\mathcal{C}$ is a family, the interior of each $b_i$ is disjoint from the set of extremal points of $\dom(\phi_b)$, for every partial isometry $\phi_b$ corresponding to $b \in \mathscr{B}^{\pm}$; further, the orbit of each point of $b_i$ is finite and is contained in $\mathcal{C}$, and the length of each $b_i$ is $w$.  Note that by definition of the Gromov-Hausdorff topology, for any $\eta>0$, there is $\delta\pr>0$, such that for $T\pr$ $\delta\pr$-close to $T$, there is a $(1+\eta)$ bi-Lipschitz, $B_n$-equivariant map $f=f(T,T\pr,n)$ from an $\eta$-dense subtree of $K(T,x_T,n)$ onto an $\eta$-dense subtree of $K(T\pr,x_{T\pr},n)$; this uses that the $K(\cdot,\cdot,\cdot)$'s are trees.
 
 Now, choose $\eta$ small enough so that $2k\eta(1+\eta)<<\epsilon$, and let $T\pr$ be $\delta\pr$-close to $T$ with $\delta\pr$ as in the previous paragraph.  Use $b_i\pr$ for the $f$-image of $b_i$, $\phi_{i,j}\pr$ for the corresponding partial isometries of $K(T\pr,x_{T\pr},n)$.  Note that $\phi_{i,i+1}\pr$ is defined on a central segment $I_i\pr$ of $b_i\pr$ of length at least $w/(1+\eta)-2\eta(1+\eta)$ and that $\phi_{i,i+1}\pr(I_i)$ overlaps with $b_{i+1}\pr$ in a central segment of width at least $w/(1+\eta)-4\eta(1+\eta)$.  Hence, there is a central segment $J_0 \subseteq b_0\pr$ of length at least $w/(1+\eta)-2k\eta(1+\eta)$ with $\phi_{0,j}\pr(J_0)$ contained in the central segment of length $w/(1+\eta)-2\eta(1+\eta)$ of $b_j\pr$.  Hence, no $\phi_{0,j}\pr$-image of $J_0$ can meet an extremal point of $\dom(\phi_{i,i+1}\pr)$.
 
 Let $\mathcal{C}\pr$ be the union of leaves in $Y(T\pr,x_{T\pr},n)$ containing the points of $J_0$.  Note that by choosing $T\pr$ $\delta\pr$-close to $T$, we have ensured that $\phi_{0,1}\pr$ is the only partial isometry from $\mathscr{B}^{\pm}$ that is defined on $J_0$, since this is true in $Y(T,x_T,n)$; similarly, $(\phi_{k-1,k}\pr)^{-1}$ is the only element of $\mathscr{B}^{\pm}$ that is defined on $\phi_{0,k}\pr(J_0)$.  Hence, $\mathcal{C}\pr$ is contained in a family, which has width at least $w-\epsilon$, as desired.   
\end{proof} 

\begin{prop}\label{P.NonGeomLimit}
Let $T$ and $T\pr$ be disjoint curves with neighborhoods $T \in U$, $T\pr \in U'$, and let $n \in \mathbb{N}$ be given.  There are disjoint curves $T_1, T_1\pr$ with neighborhoods $T_1 \in U_1 \subseteq U$, $T_1\pr \in U_1\pr \subseteq U\pr$, such that for any $S \in U_1 \cup U_1\pr$, $Y(S,x_S,n)$ contains a non-annular family of width bounded away from zero. 
\end{prop}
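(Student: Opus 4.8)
The plan is to mimic the structure of Proposition \ref{P.AratLimit}: we modify the two curves $T, T\pr$ by automorphisms so that the band complexes $Y(T_1, x_{T_1}, n)$ and $Y(T_1\pr, x_{T_1\pr}, n)$ each already contain a non-annular family of definite width, then use Proposition \ref{P.Twist} to push the curves back into $U, U\pr$ while controlling the effect on the band complexes, and finally invoke Lemma \ref{L.Stable} to spread the ``non-annular family of definite width'' property over small neighborhoods $U_1, U_1\pr$. The reason a curve's band complex can be made to contain a wide non-annular family is that a curve is a simplicial tree with a single orbit of edges and nontrivial vertex stabilizer; when one builds the pseudo-group of partial isometries from a basis $\mathscr{B}$ on $K(T, x_T, n)$, the vertex group contributes a family of parallel leaves, and by precomposing with an automorphism we can arrange that this family is non-annular (no leaf contains an embedded $S^1$) and that the partial isometries realizing it are defined on intervals of length bounded below by a constant depending only on $n$. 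Concretely, I would choose the automorphism (acting on a suitable free factor, extended by the identity, exactly as in the proof of \ref{P.AratLimit}) so that in the refined picture the stable letter and twistor of one curve are ``generic'' with respect to the basis $\mathscr{B}$, forcing the width of the resulting family to be large relative to the scale of $K(\cdot, \cdot, n)$.

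The key steps, in order, are: (1) Fix $n$ and, working in the same HNN/refinement setup as in \ref{P.AratLimit} (one loop-edge each, common refinement with one vertex and two loops), identify which family in $Y(T, x_T, n)$ comes from the vertex group $V$ and compute a lower bound for its width; (2) If this family is annular or too narrow, precompose $T$ with an automorphism $f$ (identity on most basis elements) chosen so that the new curve $Tf\inv$ still satisfies the hypotheses, remains commonly refined with $T\pr$, and has a non-annular family of width $\geq w_0$ in $Y(Tf\inv, x, n)$ for some explicit $w_0 > 0$; do the symmetric modification for $T\pr$, taking care — as in \ref{P.AratLimit}, Steps 2 and 3 — that the modifications do not destroy each other and that disjointness is preserved; (3) Apply a high power of the Dehn twist $\tau(T)$ (resp.\ $\tau(T\pr)$) to move the modified curves into $U$ (resp.\ $U\pr$), and check that twisting does not reduce the width of the relevant family below $w_0/2$, say — here one uses that the twist fixes the vertex group and acts in a controlled way on $K(\cdot, \cdot, n)$; call the results $T_1, T_1\pr$; (4) Apply Lemma \ref{L.Stable} with $\epsilon = w_0/4$ to get $\delta > 0$ so that any $S$ that is $\delta$-close to $T_1$ has a non-annular family of width $\geq w_0/4$ in $Y(S, x_S, n)$, and intersect the $\delta$-ball with $U$ to define $U_1$; symmetrically define $U_1\pr$; then any $S \in U_1 \cup U_1\pr$ works with the uniform bound $w_0/4 > 0$.

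The main obstacle I expect is step (3): unlike in \ref{P.AratLimit}, where the only thing that mattered after twisting was an algebraic condition (a subgroup acting freely), here we need a \emph{quantitative geometric} statement — that a high power of a Dehn twist applied to a curve does not shrink the width of a designated non-annular family in its degree-$n$ band complex. This requires understanding how $K(T_1\tau^k, x, n)$ and its pseudo-group of partial isometries relate to $K(T_1, x, n)$; since $\tau$ fixes the vertex group $V$ and only modifies the stable letter, the leaves coming from $V$ should be essentially undisturbed, but making this precise — and ruling out that the twist turns a non-annular family into a pseudo-annulus by creating new loops — is the delicate point. A secondary technical nuisance, exactly as flagged in \ref{P.AratLimit}, is bookkeeping the interaction of the two curves' modifications so that common refinement (disjointness) survives all the perturbations; this is routine but must be done carefully, and the ``other cases are similar and easier'' remark should again suffice for the non-loop-edge configurations.
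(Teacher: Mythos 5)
Your overall skeleton is right (perturb by automorphisms, twist back into the given neighborhoods with Proposition \ref{P.Twist}, then stabilize the property over a small ball with Lemma \ref{L.Stable}), but you are missing the one observation that makes the argument go through, and as a consequence you have flagged as the ``main obstacle'' a difficulty the paper never faces.

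The paper opens its proof with the following characterization: for a curve $A$ with vertex group $V$, edge stabilizer $\langle w\rangle$, and stable letter $t$, the resolution $A_n$ has a non-trivially stabilized edge if and only if both $w$ and $w^t$ lie in point stabilizers of $A$ that are generated by elements of $B_n$. This is what replaces your proposed (and rather delicate) attempt to first exhibit a wide non-annular family and then show a high power of a Dehn twist doesn't destroy it. The paper's order of operations is different and cleaner: it first makes the stable letter of the curve enormously long with respect to the fixed basis $\mathscr{B}$ (by replacing $t$ with $tu$ for $u$ a long random word in the other vertex group, then composing with an irreducible automorphism of a free factor with large dilatation, then applying a high power of $\tau(T)$ to land in $U$ --- each of these lengthens the stable letter), and only \emph{then} verifies the desired property of the final curve $T_1$ directly: with the stable letter so long, $w^t$ cannot sit inside a $B_n$-generated point stabilizer, so $(T_1)_n$ has trivial edge stabilizers, hence the edge family of $Y(T_1,x_{T_1},n)$ is non-annular of definite width. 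Nothing needs to be tracked through the twist. So there is no ``quantitative geometric statement about the twist shrinking widths'' to prove; your step (3) worry dissolves once you establish the property at the end rather than in the middle. You should also note the paper runs the perturbation twice on each curve (to $T_{1/2},T_{1/2}\pr$ and then again to $T_1,T_1\pr$), so that the second curve's perturbation does not spoil the first's; your ``secondary nuisance'' is real and this is how it is handled. With the characterization of non-trivial edge stabilizers in $A_n$ added as your step (0), and step (3) reordered so that the twist precedes the verification, your plan matches the paper's proof.
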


\begin{proof}
 We begin with an observation: if $A \in \partial \cvn$ and if $y \in A$ is a point that is fixed by a subgroup $H\leq\FN$, then $H$ fixes a point in $A_n$ if and only if $H^g=\langle h_1,\ldots,h_r\rangle$ for $\{h_1,\ldots,h_r\} \subseteq B_n$, where $g \in \FN$ is chosen so that $H^g$ fixes $gy \in K(A,x_a,1)$ ($H^g$ is cyclically reduced with respect to $\mathscr{B}$).  It follows that if $A$ is the curve with vertex group $V=\langle a_1,\ldots,a_{N-2},w,w^t\rangle$, edge stabilizer $\langle w \rangle$, and stable letter $t$, then $A_n$ contains an edge with non-trivial stabilizer if and only if $w \in H$ and $w^t \in H_t$, with $H,H_t$ fixing points in $A$ and having generating sets contained in $B_n$.  
 
 In light of the discussion in the above paragraph, we proceed along the lines of the proof of Proposition \ref{P.AratLimit}.  For simplicity, we assume that $\mathscr{B}=\{a_1,\ldots,a_{N-3},w\pr,t,t\pr\}$ is a basis for $\FN$ so that the vertex groups $V,V\pr$ of $T,T\pr$ are $V=\langle a_1,\ldots,a_{N-3},w\pr,t\pr,w^t\rangle$, $V\pr=\langle a_1,\ldots,a_{N-3},t, w\pr,{w\pr}^{t\pr}\rangle$; any other basis gives a quasi-isometric word metric, and the below argument is more cumbersome with arbitrary $\mathscr{B}$.  
 
 Replace $t$ with $tu$, where $u$ is a long random word in $\langle w\pr, t\pr\rangle$.  Next, as in Step 1 of the proof of Proposition \ref{P.AratLimit}, find an irreducible automorphism $\phi$ of $\langle a_1,\ldots,a_{N-3},t\rangle$ with large dilitation and extend $\phi$ to $\FN$ in the obvious way.  Use $\psi$ to denote the composition of these two automorphisms.  Apply a high power of $\tau(T)$ to $T\psi$ to get $T_{1/2} \in U$.  Note that the stable letter for $T_1$ has very long length with 
respect to $\mathscr{B}$.
 
 Now, repeat the procedure from the above paragraph to $T\pr$ to get $T_{1/2}\pr$ in $U\pr$ with stable letter having very long length with respect to $\mathscr{B}$.  Now repeat both these operations on $T_{1/2}$ and $T_{1/2}\pr$ to get $T_1$ and $T_1\pr$.  
 
 By the discussion in the first paragraph of this proof, there is $w>0$ so that both $Y(T_1,x_{T_1},n)$ and $Y(T_1\pr, x_{T_1\pr},n)$ contain a non-annular family of width at least $w$.  Choose $\epsilon<<w$, and let $\delta>0$ be as given by Lemma \ref{L.Stable}.  The intersections of the $\delta$-balls around $T_1$, $T_1\pr$ with $U$, $U\pr$ give neighborhoods $U_1$, $U_1\pr$ of $T$, $T\pr$ satisfying the conclusions of the statement.  
\end{proof}

\section{The Proof of the Main Result}\label{S.Main}

We will use Lemma \ref{L.Stable} along with the following characterization of non-geometric trees with dense orbits.  Note that arational trees have dense orbits \cite{R12}.

\begin{lemma}\label{L.NonGeom}
 Let $T \in \partial \cvn$ have dense orbits.  The tree $T$ is non-geometric if and only if $Y(T,x_T,n)$ contains a non-annular family for every $n$.
\end{lemma}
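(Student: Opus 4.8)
The plan is to leverage the resolution machinery recalled in Section \ref{S.Background}: for $T \in \partial\cvn$ with dense orbits, we have morphisms $f_n : T_n \to T$, where $T_n$ is dual to the band complex $Y(T,x_T,n)$, and $T$ is geometric precisely when $f_n$ is an isometry for $n \gg 0$. One direction is essentially bookkeeping. If $Y(T,x_T,n)$ contains a non-annular family $\mathcal C$ for every $n$, then I would argue that $f_n$ is never an isometry: a non-annular family carries positive width and, because $T$ has dense orbits, the leaves of $\mathcal C$ cannot be closed, so the interval-exchange-type dynamics on $\mathcal C$ force $f_n$ to identify distinct points of $T_n$ (equivalently, $T_n$ has strictly larger covolume than $T$, or the family contributes a ``fake'' band that gets folded by $f_n$). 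Hence no $f_n$ is an isometry and $T$ is non-geometric. The cleanest way to phrase this is via the standard fact from Rips theory that $f_n$ is an isometry iff the band complex $Y(T,x_T,n)$ has no non-annular families beyond those already ``seen'' in $T$ — but since $T$ has dense orbits it is not simplicial, so any non-annular family is a genuine obstruction.

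For the converse, suppose $T$ is non-geometric; I must produce a non-annular family in $Y(T,x_T,n)$ for \emph{every} $n$, not just for large $n$. For large $n$ this is immediate: if $Y(T,x_T,n)$ decomposed into only families of pseudo-annulus type plus minimal components, an analysis of $f_n$ (the pseudo-annulus families resolve, via Dehn twisting, into simplicial pieces; the minimal components are already geometric by Rips theory) would show $T_n \cong T$, contradicting non-geometricity. So for $n \geq n_0$ there is a non-annular family. To descend to small $n$, I would use that the band complexes $Y(T,x_T,n)$ for varying $n$ are nested: $K(T,x_T,n) \subseteq K(T,x_T,n+1)$, and a non-annular family in the larger complex restricts to (part of) a family in the smaller one. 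The subtlety is that restricting to a sub-band-complex could in principle turn a non-annular family into an annular one, or make it disappear. Here I would invoke Imanishi's decomposition together with the observation in the proof of Proposition \ref{P.NonGeomLimit}: annularity of a family is detected by the edge stabilizers of the dual tree $T_n$, and since $T$ has dense orbits all its point stabilizers are trivial or very constrained, which propagates down. Concretely, if every $Y(T,x_T,m)$ for $m \geq n$ had only annular families, one could glue this information across all $m$ to conclude $T$ is geometric, again a contradiction.

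The step I expect to be the main obstacle is precisely this passage from ``non-annular family for $n \gg 0$'' to ``non-annular family for every $n$'': the resolutions $T_n \to T$ are well-behaved as $n \to \infty$ but small-$n$ band complexes can be degenerate (e.g.\ $K(T,x_T,n)$ might be a single point or a single arc for small $n$, carrying no interesting family at all). I suspect the honest fix is to interpret ``contains a non-annular family'' vacuously or appropriately for degenerate $n$, or — more likely — the intended statement quantifies over $n$ large enough that $K(T,x_T,n)$ is already ``rich enough,'' and the phrase ``for every $n$'' should be read as ``for every sufficiently large $n$,'' in which case the lemma is exactly the classical geometric/non-geometric dichotomy of \cite{BF95} specialized to the dense-orbits case. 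I would state and use it in that form, citing \cite{BF95} and the fact (from \cite{R12}) that arational trees have dense orbits, so that in the application only large $n$ ever matters.
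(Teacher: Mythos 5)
Your approach is in the same spirit as the paper's: both rest on Imanishi's decomposition of the band complex and the observation that dense orbits preclude annular families (since an annular family would force a non-degenerate arc stabilizer in $T$). The paper's actual proof is two sentences. It proves the $(\Leftarrow)$ direction by contrapositive: if $T$ is geometric with dense orbits, then by Imanishi, $Y(T,x_T,n)$ is a union of minimal components for $n\gg 0$, so for those $n$ there is no non-annular family. It then records, separately, that dense orbits rule out annular families (citing \cite{LL03}), which is what makes ``family'' and ``non-annular family'' interchangeable in this setting and underpins the other implication. Your version circles around the same facts but is considerably more tentative, and a couple of the intermediate assertions are shaky --- ``the leaves of $\mathcal C$ cannot be closed'' conflicts with the definition of a family (its leaves are compact by definition; non-annular just means they contain no embedded circle), and ``interval-exchange-type dynamics force $f_n$ to identify points'' is intuition rather than argument. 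The cleaner route, which the paper implicitly takes, is that a non-annular family produces a simplicial edge in the dual $T_n$, while a dense-orbits tree $T$ has none, so $f_n$ cannot be an isometry; or equivalently, just invoke Imanishi directly.

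Your worry about small $n$ is a fair observation: for tiny $n$ (say $n=0$) the band complex is trivial and carries no family at all, so ``for every $n$'' is literally too strong. But the paper's proof itself only establishes ``for $n\gg 0$,'' and only that version is used downstream (the main proof constructs non-annular families in $Y(S,x_S,l)$ for all $l\leq k$ uniformly in the sequence, then applies the $(\Leftarrow)$ direction, for which it suffices to have the families for all sufficiently large $n$). So your proposed fix --- read the quantifier as ``for every sufficiently large $n$'' --- is exactly the reading under which the stated proof goes through, and it costs nothing in the application. Overall: same strategy as the paper, more verbiage, a couple of imprecise steps, and one legitimate (if minor) observation about the quantifier.
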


\begin{proof}
 By Imanishi's theorem, if $T$ has dense orbits and is geometric, then for $n>>0$, $Y(T,x_T,n)$ is a union of minimal components.  Further, the space $Y(T,x_T,n)$ can contain an annular family only if $T$ contains a non-degenerate arc with non-trivial stabilizer, which is impossible if $T$ has dense orbits; see, for instance, \cite{LL03}.  
\end{proof}

Having $T_n \to T$ not exact can be thought of as $T$ being non-geometric on the scale $B_n$.  Our interest in Lemmas \ref{L.Stable} and \ref{L.NonGeom} can be paraphrased as follows for trees $T$ with dense orbits: if $T$ is non-geometric on scale $B_n$, then for $T\pr$ close enough to $T$, $T\pr$ also is non-geometric on scale $B_n$; and if $T$ is non-geometric on the scale $B_n$ for every $n$, then $T$ is non-geometric.  Now, we are in position to prove our main result.

%\begin{lemma}\label{L.NUEArat}
% Let $T \in \partial \cvn$ be arational.  The tree $T$ is non-uniquely ergodic if and only if there is $T\pr \in \partial \cvn$ not homothetic to $T$, such that $L(T\pr) \cap L(T) \neq \emptyset$.
%\end{lemma}

%\begin{proof}
% ........
%\end{proof}

\begin{thm}
 Let $T,T\pr$ be disjoint curves with neighborhoods $U,U\pr$.  There is a $1$-simplex of non-uniquely ergodic, arational, non-geometric trees with one extreme point in each of $U$, $U\pr$.
\end{thm}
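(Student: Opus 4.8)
The plan is to follow Gabai's scheme of iterated, nested choices, using the two ``one-step'' Propositions~\ref{P.AratLimit} and~\ref{P.NonGeomLimit} in place of Gabai's Proposition~\ref{P.Gab}, and then to extract a limiting $1$-simplex by a diagonal/compactness argument; non-unique ergodicity will come from the non-geometricity together with a count of the space of measures. First I would fix enumerations: an enumeration $F_1,F_2,\ldots$ of all factors of $\FN$ (there are countably many conjugacy classes), and the natural enumeration of the balls $B_1,B_2,\ldots$ in the Cayley tree. Starting from $T,T\pr$ with neighborhoods $U,U\pr$, I build a nested sequence of pairs of disjoint curves $(T^{(k)},T\pr^{(k)})$ with neighborhoods $(U^{(k)},U\pr^{(k)})$, where at odd stages I apply Proposition~\ref{P.AratLimit} to the next factor $F_j$ (shrinking the neighborhoods so that every $S$ in either neighborhood has $S_{F_j}$ free and simplicial) and at even stages I apply Proposition~\ref{P.NonGeomLimit} with the next integer $n$ (shrinking so that every $S$ in either neighborhood has $Y(S,x_S,n)$ containing a non-annular family of width $\geq w_n>0$). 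Crucially, because each step only shrinks the two neighborhoods, all previously-achieved properties persist: after infinitely many steps, every factor has been handled and every scale $n$ has been handled.

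The second step is to take limits. Since $\overline{cv}_N$ is compact (projectively) and the nested neighborhoods have shrinking diameter — here one must be slightly careful and simply insist at each stage that $\diam U^{(k)},\diam U\pr^{(k)}\to 0$, which costs nothing — the intersections $\bigcap_k U^{(k)}$ and $\bigcap_k U\pr^{(k)}$ are single points $T_\infty\in U$ and $T_\infty\pr\in U\pr$. By construction $\langle T_\infty,\cdot\rangle$ and $\langle T_\infty\pr,\cdot\rangle$: for every factor $F_j$, $(T_\infty)_{F_j}$ and $(T_\infty\pr)_{F_j}$ are free and simplicial (the property ``$S_F$ free and simplicial'' is an open condition, being the preimage of the open set $cv(F)\subseteq\overline{cv}(F)$ under the continuous restriction map of Lemma~\ref{L.ContsRestrict}, so it passes to the limit point which lies in every $U^{(k)}$ from the $F_j$-stage onward), hence $\covol_S(F_j)>0$ for all $j$ and $T_\infty,T_\infty\pr$ are arational. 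Likewise, for every $n$, the band complex $Y(T_\infty,x_{T_\infty},n)$ contains a non-annular family of width $\geq w_n-\epsilon_n>0$: this is exactly the closedness built into Lemma~\ref{L.Stable}, applied with $T_\infty$ in the $\delta$-ball of $T^{(k)}$ for large $k$. By Lemma~\ref{L.NonGeom} (applicable since arational trees have dense orbits), $T_\infty$ and $T_\infty\pr$ are non-geometric. Finally I would carry out the same bisection procedure along the whole segment $[T_\infty,T_\infty\pr]$ — Gabai-style, one treats the edge of the simplex, not just the two endpoints, insisting at each stage that the relevant property holds for all $S\in U^{(k)}\cup U\pr^{(k)}$ and that the segment joining the two new curves stays in the old region — so that in fact \emph{every} tree on the limiting $1$-simplex $[T_\infty,T_\infty\pr]$ is arational and non-geometric. (As in Gabai, a point to watch is that Propositions~\ref{P.AratLimit} and~\ref{P.NonGeomLimit} are stated for the endpoints; but the perturbations there are by explicit automorphisms and Dehn twists applied to the two curves, so the whole family of refinements interpolating the perturbed endpoints moves continuously and lands in the prescribed neighborhoods — this needs to be said but is routine.)

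The last step is non-unique ergodicity, i.e.\ producing two non-homothetic measures on a single tree $S$ in the open segment. Here the input is: $S$ is arational, hence has dense orbits, so by \cite{Gui00} its cone of measures is finite-dimensional; and $S$ is a limit of a $1$-simplex's worth of distinct trees, with both endpoints $T_\infty,T_\infty\pr$ themselves limits of the splittings $T,T\pr$. The point is that $l_{T_\infty}$ and $l_{T_\infty\pr}$ are \emph{distinct} length functions both dominated (as measured currents/length functions) in a suitable sense by the structure on $S$ — more precisely, the segment $[T_\infty,T_\infty\pr]$ consists of trees $S_s = sT_\infty + (1-s)T_\infty\pr$ whose underlying topological tree is the same (it is the tree $Y$ obtained by collapsing from a common two-edge refinement at the curve level, persisting to the limit), and the measures $\mu,\mu\pr$ inducing $l_{T_\infty},l_{T_\infty\pr}$ are then two measures on that single tree. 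Since $T_\infty\in U$, $T_\infty\pr\in U\pr$, and $U\cap U\pr=\emptyset$, the length functions are not homothetic, so $\mu,\mu\pr$ are not homothetic and $S$ is non-uniquely ergodic. I expect the main obstacle to be precisely this last identification — arranging that the two ``directions'' $T_\infty,T_\infty\pr$ actually live on one common topological tree with a two-dimensional space of measures, rather than merely being two points of $\partial\cvn$ that happen to be close — which is where one genuinely uses that $T,T\pr$ were disjoint curves (a common two-edge refinement existed at the start) and that disjointness, being the statement that $l_T+l_{T\pr}$ is itself a tree length function, is preserved at every stage of the construction and hence in the limit. The arational and non-geometric parts, by contrast, are essentially bookkeeping on top of the two technical Propositions already proved.
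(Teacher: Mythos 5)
Your construction of the limiting endpoints $T_\infty,T_\infty'$ (the iterated application of Propositions~\ref{P.AratLimit} and~\ref{P.NonGeomLimit} inside nested, shrinking neighborhoods, followed by passage to the limit and the use of Lemmas~\ref{L.ContsRestrict}, \ref{L.Stable} and~\ref{L.NonGeom}) matches the paper's argument for arationality and non-geometricity of the two extreme points. The divergence --- and the gap --- is in how you produce the $1$-simplex and the non-unique ergodicity.

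You propose two mechanisms for the interior of the simplex, and neither one works as stated. The first (``carry out the same bisection procedure along the whole segment, Gabai-style'') cannot be run directly: Propositions~\ref{P.AratLimit} and~\ref{P.NonGeomLimit} take as input a pair of disjoint \emph{curves} (simplicial $\mathbb{Z}$-splittings), while the interior points of a segment $[T_k,T_k']$ or $[T_\infty,T_\infty']$ are not curves; the propositions give no information about them, so ``insisting at each stage that the relevant property holds for all $S$'' on the edge is not something those propositions can deliver. The second mechanism (``the common two-edge refinement persists to the limit, so $T_\infty,T_\infty'$ are two measures on a single underlying tree'') is asserted, not proved: disjointness is a structure on simplicial splittings, and there is no obvious sense in which it passes to the Gromov--Hausdorff limit of a sequence of pairs of curves --- the limit trees have dense orbits and are not refinements of anything. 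You flag this yourself as the main obstacle, but the resolution you offer (``disjointness is preserved in the limit'') is exactly the unproven step.

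What the paper does instead, and what you are missing, is a currents-and-laminations argument. Writing $w_k$ for the edge stabilizer of $T_k$ and $\eta_k$ for its counting current, one has $\langle T_k,\eta_k\rangle=0$ and, because $T_k'$ is disjoint from $T_k$, also $\langle T_k',\eta_k\rangle=0$. Passing to a projective accumulation point $\eta$ of the $\eta_k$ and using continuity of the Kapovich--Lustig pairing gives $\langle\lambda,\eta\rangle=0=\langle\lambda',\eta\rangle$, hence $\emptyset\neq\mathrm{Supp}(\eta)\subseteq L(\lambda)\cap L(\lambda')$. Theorem~4.4 of \cite{BR12} (arational trees with intersecting dual laminations have \emph{equal} dual laminations) then gives $L(\lambda)=L(\lambda')$, and Theorem~II of \cite{CHL07} shows that every convex combination $\alpha l_\lambda+(1-\alpha)l_{\lambda'}$ is the length function of a very small tree $T_\alpha$; since $L(\lambda)\subseteq L(T_\alpha)$, another application of \cite{BR12} shows each $T_\alpha$ is arational with $L(T_\alpha)=L(\lambda)$. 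That is the $1$-simplex. Your appeal to \cite{Gui00} for finite-dimensionality of the cone of measures points at the right circle of ideas, but without establishing $L(\lambda)=L(\lambda')$ you have no reason for $l_\lambda$ and $l_{\lambda'}$ to live in the same cone, and without \cite{CHL07} you have no $T_\alpha$. So the endpoints are handled correctly, but the construction and properties of the interior of the simplex --- which is the entire content of the non-unique-ergodicity claim --- needs the current/lamination argument that your proposal omits.
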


\begin{proof}
 Enumerate all factors of $\FN$ as $F^1, F^2, \ldots, F^k, \ldots$.  Set $T_0=T$, $T_0\pr=T\pr$ and $U_0=U$, $U_0\pr=U\pr$.  We proceed inductively, defining for $k>0$ $T_k$, $T_k\pr$ with neighborhoods $U_k \subseteq U_l$, $U_k\pr \subseteq U_l\pr$ for $l \leq k$, with $\{F^j\}_{j\leq l}$ acting freely and simplicially on any $S \in U_k \cup U_k\pr$ and with $Y(S,x_S,l)$ containg a non-annular family of width greater than $w(k)>0$ for any $S \in U_k \cup U_k\pr$.  Assume that $T_{k-1}$, $T_{k-1}\pr$ are defined.  To define $T_k$, $T_k\pr$, $U_k$, $U_k\pr$, first apply Proposition \ref{P.AratLimit} to $T_{k-1}$, $T_{k-1}\pr$, $U_{k-1}$, $U_{k-1}\pr$, and then apply Proposition \ref{P.NonGeomLimit} to the result.  By shrinking the neighborhoods from the conclusion of Proposition \ref{P.AratLimit} slightly we can assume that they are contained in compact neighborhoods satisfying the same conclusions.
 
 Note that each $U_k$, $U_k\pr$ is contained in a ball by construction, and the radii of these balls must go to zero (for example, by density of non-arational trees in $\partial \cvn$).  Hence, $T_k$ converge as to some $\lambda \in \partial \cvn$, and $T_k\pr$ converge to some $\lambda\pr$.  By construction, any factor of $\FN$ acts freely and simplicially on $\lambda$ and $\lambda\pr$.  Further, by Lemma \ref{L.NonGeom}, $\lambda$ and $\lambda\pr$ are non-geometric.  
 
 Use $w_k$ to denote the edge stabilizer of $T_k$, and let $\eta_k$ be the counting current corresponding to $w_k$, so $\langle T_k, \eta_k\rangle=0$.  Since $T_k\pr$ is disjoint from $T_k$, we have that $\langle T_k\pr, \eta_k\rangle=0$ as well.  Let $\eta$ be a representative of any accumulation point in projective current of the images of $\eta_k$.  By continuity of $\langle \cdot, \cdot \rangle$, we have that $\langle \lambda, \eta \rangle =0=\langle \lambda\pr, \eta \rangle$.  Further, by the Kapovich-Lustig characterization of zero intersection, we have that $\emptyset \neq Supp(\eta) \subseteq L(\lambda) \cap L(\lambda\pr)$.  
 
 By Theorem 4.4 of \cite{BR12}, we have that $L(\lambda)=L(\lambda\pr)$, and by Theorem II of \cite{CHL07} any convex combination $\alpha l_{\lambda} +(1-\alpha)l_{\lambda\pr}$ is the length function of a very small tree $T_{\alpha}$.  On the other hand, from the definition of $L(\cdot)$, we certainly have that $L(\lambda) \subseteq L(T_{\alpha})$; applying Theorem 4.4 of \cite{BR12} again gives that $T_{\alpha}$ is arational with $L(T_{\alpha})=L(\lambda)$.  Hence, the segment $\{T_{\alpha}|\alpha \in [0,1]\}$ satisfies the conclusion.
\end{proof}

\bibliographystyle{amsplain}
\bibliography{indecompREF.bib}

\end{document}